\def\nl{\mathrm{P}^{\mathrm{op}}}
\def\P{\mathrm{P}}
\def\RP{\mathbb{R}\mathrm{P}}
\def\TC{\protect\operatorname{TC}}
\def\Sq{\protect\operatorname{Sq}}
\def\emb{\protect\operatorname{Emb}}
\def\ss{\protect\operatorname{SP}^2}
\def\imm{\protect\operatorname{Imm}}
\def\secat{\protect\operatorname{secat}}
\def\sb{\protect\operatorname{sb}}
\def\sbp{\overline{\protect\operatorname{sb}}}
\newtheorem{proposition}{Proposition}[section]
\newtheorem{corollary}[proposition]{Corollary}
\newtheorem{definition}[proposition]{Definition}
\newtheorem{theo}[proposition]{Theorem}
\newtheorem{remark}[proposition]{Remark}
\newtheorem{ejem}[proposition]{Example}
\newtheorem{lema}[proposition]{Lemma}
\begin{document}

\title{Symmetric bi-skew maps and \\ symmetrized motion planning in projective spaces}
\author{Jes\'us Gonz\'alez\thanks{Partially supported by Conacyt Research Grant 221221. }}
\date{\today}

\maketitle
\begin{abstract}
This work is motivated by the question of whether there are spaces $X$ for which the Farber-Grant symmetric topological complexity $\TC^S(X)$ differs from the Basabe-Gonz\'alez-Rudyak-Tamaki symmetric topological complexity $\TC^\Sigma(X)$. It is known that, for a projective space $\RP^m$, $\TC^S(\RP^m)$ captures, with a few potentially exceptional cases, the Euclidean embedding dimension of $\RP^m$. We now show that, for all $m\geq1$, $\TC^\Sigma(\RP^m)$ is characterized as the smallest positive integer $n$ for which there is a symmetric $\mathbb{Z}_2$-biequivariant map $S^m\times S^m\to S^n$ with a ``monoidal'' behavior on the diagonal. This result thus lies at the core of the efforts in the 1970's to characterize the embedding dimension of real projective spaces in terms of the existence of symmetric axial maps. Together with Nakaoka's description of the cohomology ring of symmetric squares, this allows us to compute both $\TC$ numbers in the case of $\RP^{2^e}$ for $e\geq1$. In particular, this leaves the torus $S^1\times S^1$ as the only closed surface whose symmetric (symmetrized) $\TC^S$ ($\TC^\Sigma$) -invariant is currently unknown.
\end{abstract}

\noindent{\small{\em 2010 Mathematics Subject Classification}: 55M30, 57R40.}

\smallskip\noindent {\small{\em Keywords and phrases}: Topological complexity, symmetric motion planning, axial maps with further structure, equivariant partition of unity, symmetric square of a space.}

\section{Introduction}
Farber's topological complexity of a space $X$, $\TC(X)$, can be defined as the sectional category\footnote{We follow the standard normalization convention for the sectional category: a fibration with a global section has zero sectional category.} of the double evaluation map $e_{0,1}\colon\P(X)\to X\times X$, i.e.~the fibration which sends a path $\gamma\colon[0,1]\to X$ into the ordered pair $e_{0,1}(\gamma)=(\gamma(0),\gamma(1))$. This concept, originally motivated by the motion planning problem in robotics (\cite{MR2074919}), has found interesting connections with classical problems in differential topology and homotopy theory. This paper develops on one such a connection. 

\medskip
A number of variants of Farber's $\TC$ concept have raised as models of the motion planning problem in the presence of symmetries. Such a line of research was opened up by Farber-Grant in~\cite{MR2359031} by considering the pullback (restriction) $\epsilon_{0,1}\colon \nl(X)\to X\times X-\Delta_X$ of $e_{0,1}$ under the inclusion $X\times X-\Delta_X\hookrightarrow X\times X$, where $\Delta_X=\{(x,x)\colon x\in X\}$ is the diagonal. Both $X\times X$ and $\P(X)$ come equipped with a natural switching involution, namely $\tau$: $\tau(x_1,x_2)=(x_2,x_1)$ and $(\tau\cdot\gamma)(t)=\gamma(1-t)$. The restricted involutions on $X\times X-\Delta_X$  and on the space of open paths $\nl(X)$ are fix-point free, and $\epsilon_{0,1}$ becomes a $\tau$-fibration.

\begin{definition}\label{jsuiwkdcn}
The symmetric topological complexity of a space $X$, $\TC^S(X)$, is one more than the $\tau$-equivariant sectional category of $\epsilon_{0,1}\colon\nl(X)\to X\times X-\Delta_X$.
\end{definition}

Thus, in the $\TC^S$-view, one considers motion planners (i.e.~local sections for $\epsilon_{0,1}$) for which the movement from an initial point $A$ to a final point $B$ (with $A\neq B$) is the time-reverse motion from $B$ to $A$. The part ``one more than'' in Definition~\ref{jsuiwkdcn} can be thought of as taking into account (a neighborhood of) the diagonal when describing actual symmetric motion planners on $X$.

\medskip
The fact that $\TC^S(X)$ is not a homotopy invariant of $X$ is one of the motivations for introducing in~\cite{bgrt} the following variant of Farber-Grant's $\TC^S$:

\begin{definition}
The symmetrized topological complexity of a space $X$, $\TC^\Sigma(X)$, is the smallest positive integer $n$ for which $X\times X$ can be covered by $n+1$ open sets $U$ each of which is closed under the switching involution $\tau$ on $X\times X$, and admits a continuous $\tau$-equivariant section $U\to\P(X)$ of the ($\tau$-equivariant) double evaluation map $e_{0,1}\colon\P(X)\to X\times X$.
\end{definition}

As noted in~\cite[Example~2.6]{markinprep}, $e_{0,1}$ is a $\tau$-fibration, so $\TC^\Sigma(X)$ can equivalently be defined as the $\tau$-equivariant sectional category of $e_{0,1}\colon\P(X)\to X\times X$.

\medskip
Much of the interest in $\TC^\Sigma(X)$ comes from the fact that, being a homotopy invariant of~$X$ (\cite[Proposition 4.7]{bgrt}), it differs from $\TC^S(X)$ by at most a unit. In fact, the inequalities
\begin{equation}\label{desigualdades}
\TC^S(X)-1\leq\TC^\Sigma(X)\leq\TC^S(X)
\end{equation}
hold for any reasonable space $X$ (see~\cite[Proposition~4.2]{bgrt}). 

\medskip
The equality $\TC^\Sigma(X)=\TC^S(X)$ is known to hold for a number of spaces: spheres (see \cite[Example 4.5]{bgrt} for even dimensional spheres, and~\cite{markinprep} for odd dimensional spheres), simply connected closed symplectic manifolds~(as follows from \cite[proof of Proposition~10]{MR2359031} and~\cite[Theorem~1]{MR1988783}; see~\cite[Theorem~6.1]{MR2491582} for the case of complex projective spaces), and all closed surfaces with the potential exceptional case of the torus $S^1\times S^1$ (see Remark~\ref{notadedon} below). But surprisingly, except for homotopically uninteresting situations~(\cite[Example~4.4]{bgrt}), \emph{no example of a space $X$ with $\TC^\Sigma(X)\neq\TC^S(X)$ is known.} This paper explores the differences between the two invariants in the case of a real projective space $\RP^m$ ---one of the central benchmarks in homotopy theory. Actually, the possibility of numerically telling apart $\TC^S(\RP^m)$ from $\TC^\Sigma(\RP^m)$ turns out to be amazingly subtle, complicated and closely related to a challenge with roots in Hopf's work~\cite{MR0004112}: A (hoped-for) characterization of the Euclidean embedding dimension of real projective spaces in terms of \emph{symmetric} axial maps. The task is best described by starting with a non-symmetric version of the problem.

\medskip
For a real projective space $\RP^m$ which is not parallelizable (i.e.~with $m\not\in\{1,3,7\}$), the invariant $\TC(\RP^m)$ is known to agree with the number $\imm(\RP^m)$ defined as the minimal dimension of Euclidean spaces where $\RP^m$ admits an immersion. In unrestricted terms, $\TC(\RP^m)$ agrees with the smallest positive integer $n$ for which there is an axial map $a\colon\RP^m\times\RP^m\to\RP^n$, i.e.~a map whose restriction to either of the axis is essential. Passing to universal covers, the above fact can be rephrased by saying the $\TC(\RP^m)$ is the smallest positive integer $n$ for which there is a map $b\colon S^m\times S^m\to S^n$ which is $\mathbb{Z}_2$-biequivariant,\footnote{The term ``bi-skew'' has been used in the literature as an alternative for ``$\mathbb{Z}_2$-biequivariant''.} i.e.~which satisfies $b(-x,y)=-b(x,y)=b(x,-y)$ for all $x,y\in S^m$ (see~\cite{MR0336757,MR1988783}).

\medskip
The $\TC$-$\imm$-axial phenomenon just described has a symmetric counterpart, summarized in~(\ref{equally}) and~(\ref{centralissue}) below. Let $\emb(\RP^m)$ stand for the smallest dimension of Euclidean spaces where $\RP^m$ admits an embedding. Let $\sb(m)$ stand for the smallest positive integer $n$ for which there exists a \emph{symmetric} axial map $\RP^m\times\RP^m\to\RP^n$, i.e.~an axial map which is $\mathbb{Z}_2$-equivariant with respect to the switching-axes involution $\tau$ on $\RP^m\times\RP^m$, and the trivial involution on $\RP^n$. Equivalently, $\sb(m)$ denotes the smallest positive integer $n$ for which there exists a $\mathbb{Z}_2$-biequivariant $b\colon S^m\times S^m\to S^n$ which is symmetric, i.e.~so that $b(x,y)=b(y,x)$ for all $x,y\in S^m$. 

\begin{remark}{\em
At the level of projective spaces, the difference $\sb(m)-m$ can be thought of as giving a measure of the failure of $\RP^m$ to be a (strictly) commutative $H$-space. We work with $\mathbb{Z}_2$-biequivariant maps, rather than with their axial-map counterpart, for the characterization of $\TC^\Sigma(\RP^m)$ (in Theorem~\ref{sigmavssb} below) is naturally given in terms of a slight specialization of the former maps.
}\end{remark} 

It is known that $\TC^S(\RP^m)\leq\emb(\RP^m)$ for any $m$, and in fact
\begin{equation}\label{equally}
\TC^S(\RP^m)=\emb(\RP^m),
\end{equation}
except \emph{possibly} for  $m\in\{6,7,11,12,14,15\}$ (see~\cite{MR3199728,MR2833574,MR2491582}). In addition, the main result in~\cite{MR0646069} asserts that $\emb(\RP^m)$ agrees, up to 1, with $\sb(m)$. Explicitly,
\begin{equation}\label{centralissue}\emb(\RP^m)-1\leq\sb(m)\leq\emb(\RP^m)\end{equation}
where the first inequality is asserted only if the ``metastable range'' condition $2\sb(m)>3m$ holds (e.g.~for $m>15$).

\medskip
To the best of our knowledge, the gap in~(\ref{centralissue}) has not been solved in either direction for general $m$. In fact, despite $\emb(\RP^m)$ has been studied extensively, no explicit projective space $\RP^m$ with $m>1$ and
\begin{equation}\label{destri}
\sb(m)<\emb(\RP^m)
\end{equation}
seems to have been singled out in the literature (but the slightly related Example~2 in~\cite[page~415]{MR0397750} should be noted). The problem can be approached via $\TC^\Sigma(\RP^m)$, which sits in a subtle way in between the two terms in~(\ref{destri}).
In fact, our main results (Theorems~\ref{sigmavssb} and~\ref{corop2} below) are motivated by comparing~(\ref{desigualdades}),~(\ref{equally}) and~(\ref{centralissue}), namely
\begin{eqnarray*}
\TC^S(\RP^m)-1\leq\TC^\Sigma(\RP^m)\leq\TC^S(\RP^m)\\
\TC^S(\RP^m)-1\leq{}\sb(m)\leq\TC^S(\RP^m)\hspace{3.3mm}
\end{eqnarray*}
(the second chain of inequalities holding, say, for $m>15$).

\begin{theo}\label{sigmavssb} For $m\geq1$,
$\sb(m)\leq\sbp(m)=\TC^\Sigma(\RP^m)\leq\TC^S(\RP^m)\leq\emb(\RP^m)$.
\end{theo}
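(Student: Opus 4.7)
Three of the four comparisons in the chain are essentially formal: $\sb(m)\leq\sbp(m)$ holds by definition, since $\sbp(m)$ counts symmetric bi-skew maps satisfying an extra monoidal requirement on the diagonal, so every witness for $\sbp(m)$ is a fortiori a witness for $\sb(m)$; the inequality $\TC^\Sigma(\RP^m)\leq\TC^S(\RP^m)$ is the right half of~\eqref{desigualdades}; and $\TC^S(\RP^m)\leq\emb(\RP^m)$ was recalled in the discussion just before~\eqref{equally}. The substance of the theorem is therefore the central equality $\sbp(m)=\TC^\Sigma(\RP^m)$, which I would establish as the symmetric-equivariant analogue of Farber's correspondence between $\TC(\RP^m)$ and bi-skew maps, adapted so as to track the switching involution $\tau$ systematically on both sides.

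For the direction $\sbp(m)\geq\TC^\Sigma(\RP^m)$, starting from a symmetric bi-skew map $b\colon S^m\times S^m\to S^n$ with monoidal diagonal behaviour, I would pull back the standard cover of $\RP^n$ by $n+1$ contractible coordinate charts through the axial map $\bar b\colon\RP^m\times\RP^m\to\RP^n$. Each resulting open set $U_i$ is $\tau$-stable because $b(x,y)=b(y,x)$. Over $U_i$, the bi-skew property fixes a canonical lift of $([x],[y])$ to a pair $(x,y)\in S^m\times S^m$, up to the simultaneous antipodal action, by prescribing the sign of the $i$-th coordinate of $b(x,y)$; combining this canonical lift with the short great-circle arc on $S^m$ produces a motion planning section of $e_{0,1}\colon\P(\RP^m)\to\RP^m\times\RP^m$ over $U_i$, and the symmetry of $b$ translates directly into $\tau$-equivariance of this section off the diagonal. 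The monoidal hypothesis on the diagonal is precisely what rules out the canonical lift picking the antipodal pair $(\tilde x,-\tilde x)$ at a diagonal point---where the short arc would be ill-defined---and instead forces the lift $(\tilde x,\tilde x)$, making the section reduce to the constant path on the diagonal, which is automatically $\tau$-invariant.

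Conversely, for $\sbp(m)\leq\TC^\Sigma(\RP^m)$, I would start from a $\tau$-equivariant sectional category cover $\{U_0,\dots,U_n\}$ of $\RP^m\times\RP^m$ with $\tau$-equivariant local sections $s_i$ of $e_{0,1}$ and build the required bi-skew map via a \v{S}varc-style classifying construction: choose a $\tau$-equivariant partition of unity $\{\rho_i\}$ subordinate to the cover (available by equivariant averaging), lift everything to the double cover $S^m\times S^m$, and combine the $\rho_i$ with the sign information carried by the lifted sections $s_i$ to assemble a $(\mathbb{Z}_2\times\mathbb{Z}_2)$-equivariant map into $S^n$. The $\tau$-invariance of the $\rho_i$ forces the resulting $b$ to be symmetric, the two sign conventions encode bi-equivariance, and the $\tau$-equivariance of each $s_i$ over the diagonal translates into the monoidal requirement on $b|_\Delta$.

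The main obstacle, as I see it, is matching the two diagonal conditions precisely. On the motion-planning side, $\tau$-equivariance forces each $s_i([x],[x])$ to be a time-reversal-invariant loop, which in a natural setting reduces to the constant loop at $[x]$; on the bi-skew side, the monoidal condition must be formulated so that it simultaneously (a) rules out the antipodal-lift disaster on the diagonal in the first direction, and (b) arises automatically from any $\tau$-equivariant motion planner in the second direction. Pinning down a single formulation that fulfills both roles---and then verifying that the two constructions above are mutually inverse at the level of these refined invariants---is where I expect the bulk of the technical work to go.
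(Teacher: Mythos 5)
Your overall plan mirrors the paper's faithfully: the outer inequalities $\sb(m)\leq\sbp(m)$, $\TC^\Sigma(\RP^m)\leq\TC^S(\RP^m)$ and $\TC^S(\RP^m)\leq\emb(\RP^m)$ are dealt with the same way, and the two directions of the central equality use essentially the same constructions as the paper (the Schwarz cone/join construction for $\sbp(m)\leq\TC^\Sigma(\RP^m)$, and the pullback of the coordinate-chart cover of $S^n$ together with short great-circle rotations for $\TC^\Sigma(\RP^m)\leq\sbp(m)$). You also correctly identify the diagonal matching as the crux, but this is exactly where your argument as written has a real gap.

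In the direction $\sbp(m)\leq\TC^\Sigma(\RP^m)$ you claim that $\tau$-equivariance of a local section forces $s_i([x],[x])$ to be a time-reversal-invariant loop which ``in a natural setting reduces to the constant loop.'' That is not correct: $\tau$-equivariance gives only $s_i(L,L)(t)=s_i(L,L)(1-t)$, a palindromic loop, and a palindromic loop need not be constant. The paper avoids this difficulty by invoking a nontrivial theorem of Grant (\cite[Theorem~5.2]{markinprep}) establishing that $\TC^\Sigma$ agrees with its \emph{monoidal} version, so one may take a cover in which every $U_i$ contains $\Delta_{\RP^m}$ and every $s_i$ restricts to the constant section on the diagonal. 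With that input the Borel lift $\sigma_i(L,L)$ equals $[(x,x)]$, the trivialization sends $[(x,x)]$ to $(g^0,(L,L))$, and the join map $\Lambda$ pushes the whole diagonal into the simplex spanned by the identity vertices inside $(\mathbb{Z}_2)^{*(n+1)}=S^n$ --- which is precisely the hyperplane-avoidance property of Definition~\ref{ladesbp}. Your proposal is missing exactly this external ingredient, or a substitute for it.

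If one prefers to avoid Grant's theorem, the gap can still be closed, but by a different observation than the one you made: a palindromic loop factors as $\alpha*\bar{\alpha}$ and is therefore nullhomotopic in $\RP^m$, which is enough to conclude that $\sigma_i(L,L)$ is $[(x,x)]$ rather than $[(x,-x)]$; at diagonal points outside $U_i$ the partition of unity vanishes, sending $\Lambda_i$ to the cone vertex, which also lies in the target simplex. Either route yields the needed monoidal behavior, whereas ``reduces to the constant loop'' does not. One last remark: the two constructions need not be shown to be mutual inverses --- each direction of the equality requires only that its construction produce a witness of the stated numerical bound.
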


The function $\sbp$ will be defined later in the paper (Definition~\ref{ladesbp} below). For now, it suffices to remark that the quality that distinguishes $\sbp$ from $\sb$ is that, in the definition of $\sbp$, symmetric $\mathbb{Z}_2$-biequivariant maps are required to have a reasonably well-controlled behavior on the diagonal. 

\medskip
Unlike~(\ref{equally}) and~(\ref{centralissue}), the characterization of $\TC^\Sigma(\RP^m)$ in Theorem~\ref{sigmavssb} holds without restrictions on~$m$. Loosely speaking, Theorem~\ref{sigmavssb} asserts that the $\TC^\Sigma$-analogue of~(\ref{destri}) can be ruled out effectively by strengthening slightly the concept of symmetric $\mathbb{Z}_2$-biequivariant maps. Additionally, it should be stressed that, for most values\footnote{The inequality $\sb(m)\geq\emb(\RP^m)-1$ is currently known to hold for all values of m, except possibly $m\in\{5,6,7,9,11,12,15\}$. Further, if attention is restricted to the $\TC^S$-$\sb$ relationship, then it is worth noticing that the inequality $\sb(m)\geq\TC^S(\RP^m)-2$ is currently known to hold for all values of $m$, except possibly $m=12$.} of $m$, at most one of the three inequalities in the conclusion of Theorem~\ref{sigmavssb} fails to be an equality ---the subtle point being the possibility that the potential failing inequality would depend on $m$. 

\begin{theo}\label{corop2}
All three inequalities in the conclusion of Theorem~\ref{sigmavssb} are sharp provided $m=2^e$ with $e\geq1\!:$
$
\emb(\RP^{2^e})=\TC^S(\RP^{2^e})=\TC^\Sigma(\RP^{2^e})=\sbp(2^e)=\sb(2^e)=2^{e+1}.$
\end{theo}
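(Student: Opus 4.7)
The plan is to use Theorem~\ref{sigmavssb} to reduce the claim to the two bookend inequalities
\begin{equation*}
\emb(\RP^{2^e})\le 2^{e+1}\qquad\text{and}\qquad \sb(2^e)\ge 2^{e+1}.
\end{equation*}
The chain $\sb(2^e)\le\sbp(2^e)=\TC^\Sigma(\RP^{2^e})\le\TC^S(\RP^{2^e})\le\emb(\RP^{2^e})$ then collapses to the constant value $2^{e+1}$, producing all five equalities simultaneously. The upper bound is immediate from Whitney's embedding theorem applied to the smooth closed $2^e$-manifold $\RP^{2^e}$, which embeds in $\mathbb{R}^{2\cdot 2^e}=\mathbb{R}^{2^{e+1}}$.

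For the lower bound $\sb(2^e)\ge 2^{e+1}$, I would argue cohomologically. A symmetric $\mathbb{Z}_2$-biequivariant map $b\colon S^m\times S^m\to S^n$ (with $m=2^e$) descends to a symmetric axial map $f\colon\RP^m\times\RP^m\to\RP^n$ that, by $\tau$-invariance, factors as $\bar f\circ q$ through the quotient $q\colon\RP^m\times\RP^m\to\ss(\RP^m)$. Setting $\alpha:=\bar f^*(y)\in H^1(\ss(\RP^m);\mathbb{Z}_2)$ for the generator $y\in H^1(\RP^n;\mathbb{Z}_2)$, axiality forces $q^*\alpha=x\otimes 1+1\otimes x$; Nakaoka's description of $H^*(\ss(\RP^m);\mathbb{Z}_2)$ shows that $q^*$ is injective in degree~$1$, so $\alpha$ is uniquely determined by this pullback. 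The relation $y^{n+1}=0$ in $\RP^n$ pulls back to $\alpha^{n+1}=0$, so $\sb(m)$ is bounded below by the largest $k$ with $\alpha^k\ne 0$. It therefore suffices to show $\alpha^{2m}\ne 0$ in $H^*(\ss(\RP^m);\mathbb{Z}_2)$ when $m=2^e$.

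The technical core is the verification that $\alpha^{2m}\ne 0$. Under $q^*$, the class $\alpha^{2m}$ maps to $(x\otimes 1+1\otimes x)^{2m}=\binom{2m}{m}\,x^m\otimes x^m$, which vanishes mod~$2$ by Kummer's theorem, since $\binom{2^{e+1}}{2^e}$ is even. Hence $\alpha^{2m}$ lies in $\ker q^*$, and detecting it requires the ``diagonal'' generators in Nakaoka's presentation that encode the fixed-point contribution of the switching involution $\tau$. My strategy is to rewrite $\alpha^{2m}$ in the Nakaoka generators and verify nonvanishing by pulling back along the diagonal inclusion $\RP^m\hookrightarrow\ss(\RP^m)$, where the computation reduces to a Steenrod-square identity in $H^*(\RP^{2^e};\mathbb{Z}_2)$; the power-of-two hypothesis enters both through Lucas' theorem (notably via the odd binomial $\binom{2^{e+1}-1}{2^e-1}$, which remains available in one degree below the vanishing degree) and through the rigid action of admissible Steenrod squares on the top class of $\RP^{2^e}$. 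This Steenrod-algebra bookkeeping inside Nakaoka's ring is the main obstacle; once it is carried out, everything else follows mechanically by assembling the sandwich of Theorem~\ref{sigmavssb}.
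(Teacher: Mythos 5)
Your overall plan matches the paper's exactly: reduce via the sandwich in Theorem~\ref{sigmavssb} to the two bookend estimates, get the upper bound on $\emb$ for free, and prove $\sb(2^e)\geq 2^{e+1}$ by factoring a hypothetical symmetric axial map $\RP^m\times\RP^m\to\RP^n$ through the symmetric square $\ss(\RP^m)$ and reading off a contradiction from the height of the degree-one class in $H^*(\ss(\RP^m);\mathbb{Z}_2)$. Your identification of $\alpha$, the observation that $q^*\alpha=x\otimes1+1\otimes x$ forces $\alpha$ to be the Nakaoka generator $\phi(1\otimes x)$, and the reduction to showing $\alpha^{2m}\neq0$ are all correct and coincide with the paper's Example~\ref{sp2p2} and Proposition~\ref{altura}.

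However, the detection mechanism you sketch for the ``technical core'' cannot work. You propose to verify $\alpha^{2m}\neq0$ by pulling back along the diagonal inclusion $\RP^m\hookrightarrow\ss(\RP^m)$, but that map is \emph{trivial} on all positive-dimensional mod~$2$ cohomology --- this is Nakaoka's Theorems~11.2 and~11.4 of~\cite{MR0091462}, and it is exactly the fact the paper exploits in Example~\ref{tcstoro} for the nilker lower bound. So the pullback of $\alpha^{2m}$ to $\RP^m$ is automatically zero and detects nothing. The correct detection is purely internal to Nakaoka's ring: one computes inductively in Nakaoka's basis, finding $\phi_1^{2^{i+1}}=\phi(1\otimes x^{2^{i+1}})+E_{2^i}(x^{2^i})$ (using that $\Sq^{2^i-s}x^{2^i}$ vanishes for $2\leq s<2^i$ by Lucas, so only the $s=2^i$ term in relation~(d) of Theorem~\ref{nakaoka} survives). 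Setting $i=e$ and using $x^{2^{e+1}}=0$ in $H^*(\RP^{2^e})$ gives $\phi_1^{2^{e+1}}=E_{2^e}(x^{2^e})$, which is a \emph{basis element} of $H^*(\ss(\RP^{2^e});\mathbb{Z}_2)$ and hence nonzero, with $\phi_1^{2^{e+1}+1}=0$ by relation~(b). Your invocation of $\binom{2^{e+1}-1}{2^e-1}$ being odd is not the binomial that actually enters; what matters is that $\binom{2^i}{2^i-s}$ is nonzero mod~$2$ only at $s=0$ and $s=2^i$. So the gap is not in the architecture but in the final step: you need the explicit basis identification in Nakaoka's ring, not a pullback to the diagonal, and without it the argument does not close.
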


Theorem~\ref{corop2} should be compared with the fact that $\TC(\RP^{2^e})=\imm(\RP^{2^e})=2^{e+1}-1$, for $e\geq1$.

\begin{remark}\label{chiquito}{\em The case $e=0$ in Theorem~\ref{corop2} is indeed exceptional in that, while $\sb(1)=1$  is obvious (multiplication of complex numbers of norm 1), the equality $\TC^\Sigma(S^1)=2$ is asserted in~\cite{doninprep,markinprep} after subtle considerations. In the final section of this paper we offer a streamlined proof of the equality $\TC^\Sigma(S^1)=2$.
}\end{remark}

It is tempting to think of the agreement between $\TC^\Sigma$ and its monoidal version (asserted in~\cite[Theorem 5.2]{markinprep}) as indirect evidence for the possibility that $\overline{\sb}(m)=\sb(m)$. However, this would have to be taken with care in view of Remark~\ref{chiquito}. 

\medskip
We do not expect the equality $\TC^S=\TC^\Sigma$ in Theorem~\ref{corop2} to be generic; we believe that the equality $\TC^S(X)=\TC^\Sigma(X)$ would have to fail even for reasonably well-behaved spaces $X$. In other words, it is hard to think that considering a neighborhood of the diagonal on its own would have to lead to the most efficient way to symmetrically motion plan. It would be interesting if the equality $\TC^S=\TC^\Sigma$ actually failed for some $\RP^m$, for then the inequality $\emb(\RP^m)\neq\sb(m)$ would be forced.

\medskip
The author would like to thank Mark Grant and Kee Lam for illuminating email discussions on the topics of this paper, and Don Davis and Mark Grant for sharing with the author of this paper early versions of their preprints~\cite{doninprep,markinprep}.

\section{$\TC^S$, $\TC^\Sigma$ and equivariant partitions of unity}
Although the inequality $\sb(m)\leq\TC^\Sigma(\RP^m)$ in Theorem~\ref{sigmavssb} follows easily from the asserted characterization $\sbp(m)=\TC^\Sigma(\RP^m)$, it is convenient to start with:
\begin{proof}[Proof of the inequality $\sb(m)\leq\TC^\Sigma(\RP^m)$ in Theorem~\ref{sigmavssb}]
Let $U_0,\ldots,U_n$ be a covering of $\RP^m\times\RP^m$ (say $n=\TC^\Sigma(\RP^m)$) by open sets each of which:
\begin{itemize}
\item is closed under the swapping involution $\tau((L_1,L_2))=(L_2,L_1)$ of $\RP^m\times\RP^m$, and
\item admits a $\tau$-equivariant section $s_i\colon U_i\to\P(\RP^m)$ for the double evaluation map $e_{0,1}\colon\P(\RP^m)\to\RP^m\times\RP^m$. (Recall from the introduction that $\tau$ acts on the path space $\P(\RP^m)$ by $\tau(\gamma)(t)=\gamma(1-t)$.)
\end{itemize}

Take a $\tau$-equivariant partition of unity $\{h_i\}$ subordinate to the cover $\{U_i\}_i$, i.e.~a family of continuous functions $h_i\colon \RP^m\times \RP^m\to [0,1]$, $0\leq i\leq n$, satisfying:
\begin{enumerate}[(i)]
\item $h_i(L_1,L_2)=h_i(L_2,L_1)$, for $L_1,L_2\in\RP^m$;\label{partcond1}
\item $\mbox{supp}(h_i)\subseteq U_i$;
\item $\max\{h_i(L_1,L_2)\colon 0\leq i\leq n\}=1$, for each $(L_1,L_2)\in\RP^m\times\RP^m$.\label{partcond3}
\end{enumerate}
For the existence of such a partition see, for instance,~\cite[Lemma~3.2]{markinprep}, \cite[page~321]{MR0126506} or, more generally, \cite[Theorem~5.2.5]{MR972503}.

\medskip
Recall the factorization
$\P(\RP^m)\stackrel{f}{\longrightarrow}S^m\times_{\mathbb{Z}_2}S^m\stackrel{\pi}\longrightarrow\RP^m\times\RP^m$ of $e_{0,1}$, where
\begin{itemize}
\item the middle space is the Borel construction $S^m\times S^m\left/\rule{0mm}{3.5mm}(x,y)\sim(-x,-y)\right.$;
\item $f(\gamma)$ is the class of the pair $(\widetilde{\gamma}(0),\widetilde{\gamma}(1))$, where $\widetilde{\gamma}$ is a lifting of $\gamma\colon[0,1]\to\RP^m$ through the usual double covering $S^m\to\RP^m$;
\item $\pi([(x_1,x_2)])=(L_{x_1},L_{x_2})$, where $L_{x_i}$ is the line determined by $x_i$.
\end{itemize}
The maps $\sigma_i:=f\circ s_i$ are $\tau$-equivariant local sections of $\pi$, where $\tau$ acts on $S^m\times_{\mathbb{Z}_2}S^m$ by $\tau\cdot([(x,y)])=[(y,x)]$. Since $\pi$ is a $\mathbb{Z}_2$-principal fibration, where the generator $g$ of $\mathbb{Z}_2$ acts on $S^m\times_{\mathbb{Z}_2}S^m$ via the formula
\begin{equation}\label{gaccion}
g\cdot[(x,y)]=[(-x,y)]=[(x,-y)],
\end{equation}
$\sigma_i$ yields a trivialization of the restriction of $\pi$ to $U_i$, i.e.~a $\mathbb{Z}_2$-equivariant homeomorphism $\lambda_i\colon \pi^{-1}(U_i)\to\mathbb{Z}_2\times U_i$ characterized by the condition
\begin{equation}
\lambda_i(x)=(g^\epsilon,\pi(x)),\mbox{ \ where $\epsilon\in\{0,1\}$ and $x=g^{\epsilon}\cdot\sigma_i(\pi(x))$.}
\end{equation}
Note that $\mathbb{Z}_2\times U_i$ inherits a $\tau$-involution via $\lambda_i$; in fact, since the action~(\ref{gaccion}) commutes with that of $\tau$, we see that this inherited $\tau$-involution on $\mathbb{Z}_2\times U_i$ takes the form
\begin{equation}\label{compatibles}
\tau\cdot(g^\epsilon,(L_1,L_2))=(g^\epsilon,\tau\cdot(L_1,L_2))=(g^\epsilon,(L_2,L_1)).
\end{equation}

Let $C\mathbb{Z}_2$ stand for the cone $\mathbb{Z}_2\times [0,1]/(g,0)\sim(g^2,0)$ ---an interval $[0,1]$ in disguise. As observed in~\cite[page~87]{Schwarz66}, the composition of $\lambda_i$ with the map $\mu_i\colon \mathbb{Z}_2\times U_i\to C\mathbb{Z}_2$ given by $\mu_i(g^\epsilon,u)=[(g^\epsilon,h_i(u))]$ extends to a (continuous) map $\Lambda_i\colon S^m\times_{\mathbb{Z}_2}S^m\to C\mathbb{Z}_2$ which is $\mathbb{Z}_2$-equivariant ($g$ acts ``horizontally'' on the cone: $g\cdot[(g^\epsilon,t)]=[(g^{1+\epsilon},t)]$). Further, in the present situation,
\begin{enumerate}[(i)]\addtocounter{enumi}{3}
\item $\Lambda_i$ is $\tau$-invariant (i.e.~$\Lambda_i([x,y])=\Lambda_i([y,x])$ for $x,y\in S^m$), in view of~(\ref{partcond1}) and~(\ref{compatibles}).\label{invariant}
\end{enumerate}
Of course Schwarz's goal is to obtain that, in view of~(\ref{partcond3}), the product $\prod_i\Lambda_i$ yields a $\mathbb{Z}_2$-equivariant map $\Lambda\colon S^m\times_{\mathbb{Z}_2}S^m\to\left(\mathbb{Z}_2\right)^{\ast (n+1)}=S^n$, which is $\tau$-invariant in view of~(\ref{invariant}). Consequently, the composition of the canonical projection $S^m\times S^m\to S^m\times_{\mathbb{Z}_2}S^m$ with $\Lambda$ yields a symmetric $\mathbb{Z}_2$-biequivariant map, completing the proof of Theorem~\ref{sigmavssb}.
\end{proof}

The proof above can be used to show the strengthened inequality $\sbp(m)\leq\TC^\Sigma(\RP^m)$ in Theorem~\ref{sigmavssb}. We first give the precise definition of the number $\sbp(m)$.

\begin{definition}\label{ladesbp}
$\sbp(m)$ is the smallest positive integer~$n$ for which there is a symmetric $\mathbb{Z}_2$-biequivariant map $b\colon S^m\times S^m\to S^n$ with the property that the image under $b$ of the diagonal $\Delta_{S^m}=\{(x,x)\colon x\in S^m \}\subset S^m\times S^m$ does not intersect some $n$-dimensional subspace of $\mathbb{R}^{n+1}$.
\end{definition}

Of course, after a suitable rotation of $S^n$, we can assume that the first Euclidean coordinate $b_0\colon S^m\times S^m\to \mathbb{R}$ of the map $b=(b_0,b_1,\ldots,b_n)$ in Definition~\ref{ladesbp} satisfies $b_0(x,x)>0$ for all $x\in S^m$.

\begin{proof}[Proof of the inequality $\sbp(m)\leq\TC^\Sigma(\RP^m)$ in Theorem~\ref{sigmavssb}]
In view of~\cite[Theorem~5.2]{markinprep}, we can start with an open covering $U_0,\ldots,U_n$ of $\RP^m\times\RP^m$ (say $n=\TC^\Sigma(\RP^m)$) so that each $U_i$:
\begin{itemize}
\item contains the diagonal $\Delta_{\RP^m}$, 
\item is closed under the swapping involution $\tau((L_1,L_2))=(L_2,L_1)$ of $\RP^m\times\RP^m$, and
\item admits a $\tau$-equivariant section $s_i\colon U_i\to\P(\RP^m)$ for the double evaluation map $e_{0,1}\colon\P(\RP^m)\to\RP^m\times\RP^m$ such that, for all $L\in\RP^m$, $s_i(L,L)$ is the constant path at $L$.
\end{itemize}
We then proceeding as in the previous proof, to find that $\sigma_i(L,L)=[(x,x)]$ whenever $x\in L$, so that $\lambda_i([(x,x)])=(g^0,(L,L))$ for all $x\in S^m$. This immeditely implies that the resulting symmetric $\mathbb{Z}_2$-biequivariant map
$$
S^m\times S^m \to S^m\times_{\mathbb{Z}_2} S^m \stackrel{\Lambda}\longrightarrow S^n=(\mathbb{Z}_2)^{*(n+1)}\subset\prod_{i=0}^{n}C\mathbb{Z}_2
$$
sends the diagonal $\Delta_{S^m}$ into the simplex generated by the various neutral elements $g^0$ of each factor $C\mathbb{Z}_2$.
\end{proof}

\begin{proof}[Proof of the inequality $\sbp(m)\leq\TC^S(\RP^m)$ in Theorem~\ref{sigmavssb}]
Let $n=\TC^s(\RP^m)$ and pick a covering $U_1,\ldots U_n$ of $\RP^m\times\RP^m-\Delta_{\RP^m}$ by open sets which are closed under the switching-axes involution $\tau$, each with a $\tau$-equivariant section $s_i\colon U_i\to\P(\RP^m)$ for the double evaluation map $e_{0,1}$. As noted in the proof of~\cite[Corollary~9]{MR2359031}, we can also pick an open neighborhood $U_0$ of $\Delta_{\RP^m}$ in $\RP^m\times\RP^m$, which is closed under the action of $\tau$, together with a $\tau$-equivariant section $s_0\colon U_0\to\P(\RP^m)$ of $e_{0,1}$ with the property that, for each line $L\in\P(\RP^m)$, $s_0(L,L)$ is the constant path at $L$. Then we are in the situation at the start of the proof of the inequality $\sb(m)\leq\TC^\Sigma(\RP^m)$, so we apply the same constructions (using the same notation), except that this time we can assume the additional property that the $\tau$-equivariant partition of unity satisfies $h_0(L,L)=1$ for all $L\in\RP^m$. In such a setting, it follows that $\Lambda_0([x,x])=(g^0,1)$ and $\Lambda_i([x,x])=(g^0,0)=(g,0)$ for all $x\in S^m$ and all $i>0$. Therefore the resulting $\Lambda\colon S^m\times_{\mathbb{Z}_2}S^m\to S^n$ is now constant on points of the form $[x,x]$, and the corresponding symmetric $\mathbb{Z}_2$-biequivariant map $S^m\times S^m\to S^n$ is constant on the diagonal $\Delta_{S^m}$.
\end{proof}

The proof of Theorem~\ref{sigmavssb} will be complete once we show (in the next section) the inequality $\TC^\Sigma(\RP^m)\leq\sbp(m)$. (In view of~(\ref{desigualdades}), the proof we have just given for the inequality $\sbp(m)\leq\TC^S(\RP^m)$ can be waived; we included the additional idea in support of Remark~\ref{lasime} below.)

\section{Symmetrized motion rules}
Definition~\ref{ladesbp} allows us to apply, word for word, the proof of~\cite[Proposition~6.3]{MR1988783} in order to complete the proof of Theorem~\ref{sigmavssb}. This short section includes the easy details for completeness.

\begin{proof}[Proof of the inequality $\TC^\Sigma(\RP^m)\leq\sbp(m)$ in Theorem~\ref{sigmavssb}] Let $n=\sbp(m)$ and pick a symmetric $\mathbb{Z}_2$-biequivariant map $b=(b_0,\ldots,b_n)\colon S^m\times S^m\to S^n$ such that 
\begin{equation}\label{condFTY}
\mbox{$b_0(x,x)>0$ for all $x\in S^m$.}
\end{equation}
For $0\leq i\leq n$, set $V'_i=V_i-\Delta_{\RP^m}$ where $V_i$ is the image under the projection $\pi\colon S^m\times S^m\to\RP^m\times \RP^m$ of the set $U_i=\{(x,y)\in S^m\times S^m\colon b_i(x,y)\neq0\}$. All sets $U_i$, $V_i$, and $V'_i$ are open, and closed under the action of the corresponding switching-axes involutions $\tau$. Furthermore, $\tau$-equivariant (continuous) sections $s_i\colon V'_i\to\P(\RP^m)$ for the double evaluation map $e_{0,1}\colon\P(\RP^m)\to\RP^m\times\RP^m$ are defined as follows: For $(L_1,L_2)\in V'_i$ there are four pairs $(\pm x_1,\pm x_2)\in U_i\cap\pi^{-1}(L_1,L_2)$. Only two of these, say $(x_1,x_2)$ and $(-x_1,-x_2)$, have positive image under $b_i$. We then set $s_i(L_1,L_2)$ to be the path in $\P(\RP^m)$ corresponding to the rotation from $L_1$ to $L_2$, through the plane these lines generate, so that $x_1$ rotates toward $x_2$ through an angle less than $180^\circ$. As illustrated below, the resulting path $s_i(L_1,L_2)$ does not depend on whether $(x_1,x_2)$ or $(-x_1,-x_2)$ is used.
\begin{center}
\begin{picture}(0,107)(0,-51)
\put(0,0){\line(0,1){50}}
\put(0,0){\line(0,-1){50}}
\put(0,0){\line(1,2){25}}
\put(0,0){\line(-1,-2){25}}
\put(-2.5,35.5){$\bullet$}
\put(15,33){$\bullet$}
\put(-2.5,-38.5){$\bullet$}
\put(-19.5,-36){$\bullet$}
\put(-19,32){\small $x_1$}
\put(9,-35){\small $-x_1$}
\put(-35,-33){\small $x_2$}
\put(25,27){\small $-x_2$}
\qbezier(-25,27)(-65,7)(-41,-25)
\put(-39.5,-26){\vector(1,-1){0}}
\qbezier(30,-30)(70,-10)(46,22)
\put(45.4,23.8){\vector(-1,2){0}}
\end{picture}
\end{center}
Because of~(\ref{condFTY}), $s_0$ extends to a continuous $\tau$-equivariant section of $e_{0,1}$ on $V_0$ so that $s_0(L,L)$ is the constant path (with constant value $L$). The proof is complete since $V_0$, $V_1$, \ldots, $V_n$ cover $\RP^m\times\RP^m$.
\end{proof}

In view of~(\ref{centralissue}), Theorem~\ref{sigmavssb} implies that instances with
\begin{equation}\label{diferentes}
\TC^\Sigma(\RP^m)<\TC^S(\RP^m)
\end{equation}
could only happen when optimal embeddings of $\RP^m$ are not realizable by symmetric axial maps ---a possibility that, to the best of our knowledge, cannot be currently overruled for $m>1$. Furthermore, the equalities $\TC^\Sigma(\RP^m)=\sbp(m)=\sb(m)$ would be forced whenever~(\ref{diferentes}) holds (here we are implicitly assuming that $m$ lies in the range where the first inequality in~(\ref{centralissue}) holds).

\begin{remark}\label{lasime}{\em
A close look at the techniques in this and the previous section reveals that, for any $m\geq1$, $\TC^\Sigma(\RP^m)$ agrees with the smallest positive integer $n$ for which there is a symmetric $\mathbb{Z}_2$-biequivariant map $S^m\times S^m\to S^n$ which is constant on the diagonal. The later fact is the right symmetrization of the corresponding property for $\TC(\RP^m)$, though the proof in the non-symmetric case reduces to the simpler homotopy fact that an axial map $\RP^m\times\RP^m\to\RP^n$, being nulhomotopic on the diagonal, is homotopic to a (necessarily axial) map $\RP^m\times\RP^m\to\RP^n$ which is in fact constant on the diagonal.
}\end{remark}

\section{Symmetric squares and $\TC^\Sigma$}
The general inequalities $\emb(\RP^m)\geq\TC^\Sigma(\RP^m)\geq\sb(m)$ can be used to compute the value of $\TC^\Sigma(\RP^m)$ provided one can settle suitably large lower bounds for $\sb(m)$. In this section we start by establishing such estimates in the case $m=2^e\geq2$, thus proving Theorem~\ref{corop2}. The method (based on a Borsuk-Ulam-type argument using symmetric squares) is first illustrated in Example~\ref{sp2p2} below for the (geometrically much simpler) case $e=1$. 

\medskip
The symmetric square of a space $X$, $\ss(X)$, is the orbit space of $X\times X$ by the switching involution $\tau$. We think of $X$ as being embedded (diagonally) both in $X\times X$ and in $\ss(X)$. Note also that any symmetric axial map $\RP^m\times\RP^m\to\RP^n$ factors through the canonical projection $\RP^m\times\RP^m\to\ss(\RP^m)$. A useful geometric fact is that $\mathrm{SP}^2(\RP^2)$ is homeomorphic to $\RP^4$ with the diagonal inclusion $\RP^2\hookrightarrow\ss(\RP^2)=\RP^4$ being nullhomotopic (see~\cite[Lemma~1]{MR0341511}). 

\begin{ejem}\label{sp2p2}{\em
In view of Theorem~\ref{sigmavssb} and the well known equality $\emb(\RP^2)=4$, the case $e=1$ in Theorem~\ref{corop2} will follow once we show $\sb(2)\geq4$. So, assume for a contradiction that the composition $$b=\left(\RP^2\times\RP^2\to\ss(\RP^2)=\RP^4\stackrel{b_1}{\longrightarrow}\RP^3\right)$$ is a symmetric axial map. The ``axial'' condition gives $ b^*(x)=x\otimes1+1\otimes x$, where $x\in H^1(\RP^m;\mathbb{Z}_2)$ stands for the generator. This forces $b_1^*(x)=x$, which is impossible as $x^4=0$ on $\RP^3$, but $x^4\neq0$ in $\RP^4$.
}\end{ejem}

\begin{remark}\label{notadedon}{\em
Davis' observation~(\cite{davistcsyms1}) that the assertions
\begin{enumerate}[(i)]
\item $\TC(X)\leq\TC^\Sigma(X)\leq\TC^S(X)$;
\item all closed surfaces $\Gamma$ have $\TC^S(\Gamma)\leq4$~(\cite[Proposition~10]{MR2359031});
\item except for $S^2$, $S^1\times S^1$ and $\RP^2$, all closed surfaces $\Gamma$ have $\TC(\Gamma)=4$~(\cite{CV,dran,MR3544546, Far}),
\end{enumerate}
imply that both inequalities in~(i) above are in fact equalities for all closed surfaces $\Gamma$, except perhaps for $\Gamma\in\{S^2, S^1\times S^1, \RP^2\}$. The corresponding equality $\TC^\Sigma(\RP^2)=\TC^S(\RP^2)$ is now accounted for by Example~\ref{sp2p2}. As noted in the introduction of this paper, the equality $\TC^S(S^2)=\TC^\Sigma(S^2)$ is also known. (See Example~\ref{tcstoro} below for a discussion of what is currently known in the case of the torus.)
}\end{remark}

The topology of symmetric squares $\mathrm{SP}^2(\RP^m)$ for $m>2$ is much more subtle than that for $m=2$. In order to deal with the general form of Theorem~\ref{corop2}, we shall make use of the description in~\cite{MR0091462} of the mod 2 cohomology ring of $\mathrm{SP}^2(X)$. We give a short description of Nakaoka's results after stating the main goal in this section, Proposition~\ref{altura} below, and observing that it yields Theorem~\ref{corop2}. The proof of Proposition~\ref{altura} will then follow.

\begin{proposition}\label{altura}
Let $m\geq2$ with $2^e\leq m<2^{e+1}$. Then $H^1(\mathrm{SP}^2(\RP^m);\mathbb{Z}_2)=\mathbb{Z}_2$, and the generator $\phi_1$ of this group satisfies $\phi_1^{2^{e+1}}\neq0=\phi_1^{2^{e+1}+1}$.
\end{proposition}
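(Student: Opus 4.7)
To prove Proposition~\ref{altura}, I would combine the Frobenius identity in characteristic $2$ with Nakaoka's explicit description of $H^*(\ss(X);\mathbb{Z}_2)$.

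First I would pin down the generator $\phi_1$. The equality $H^1(\ss(\RP^m);\mathbb{Z}_2)=\mathbb{Z}_2$ follows from Nakaoka's description, or from $\pi_1(\ss(\RP^m))=H_1(\RP^m;\mathbb{Z})=\mathbb{Z}/2$ (via Dold-Thom, using $m\geq 2$). Writing $\pi\colon\RP^m\times\RP^m\to\ss(\RP^m)$ for the canonical projection, the generator $\phi_1$ is characterized by $\pi^*(\phi_1)=x\otimes 1+1\otimes x$, since this is the unique nonzero $\tau$-invariant element of $H^1(\RP^m\times\RP^m;\mathbb{Z}_2)$. The Frobenius identity in characteristic $2$ then yields
$$\pi^*(\phi_1^{2^j})=x^{2^j}\otimes 1+1\otimes x^{2^j}\in H^{2^j}(\RP^m\times\RP^m;\mathbb{Z}_2),$$
which is nonzero iff $2^j\leq m$. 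In particular $\phi_1^{2^e}\neq 0$, while $\pi^*(\phi_1^{2^{e+1}})=\pi^*(\phi_1^{2^{e+1}+1})=0$; both the nonvanishing of $\phi_1^{2^{e+1}}$ and the vanishing of $\phi_1^{2^{e+1}+1}$ therefore occur inside $\ker\pi^*$, so pullback to the cover provides no further information.

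The main obstacle is establishing $\phi_1^{2^{e+1}}\neq 0$. Here Nakaoka's description becomes essential, as it realizes $H^*(\ss(X);\mathbb{Z}_2)$ as the image of $\pi^*$ augmented by ``diagonal'' classes arising from the branching of $\pi$ along the diagonal image $\bar\Delta X\subset\ss(X)$. Using the Cartan identity $\Sq^{2^e}\phi_1^{2^e}=(\phi_1^{2^e})^2=\phi_1^{2^{e+1}}$ together with Nakaoka's multiplicative structure, I would identify $\phi_1^{2^{e+1}}$ with a nonzero diagonal class attached to $x^{2^e}\in H^{2^e}(\RP^m;\mathbb{Z}_2)$, the latter being nonzero precisely under our standing assumption $2^e\leq m$. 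As a sanity check, for $m=2$ we have $\ss(\RP^2)=\RP^4$ and $\phi_1=x$, so $\phi_1$ has height $4=2^{e+1}$ with $e=1$, consistent with the claim.

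Finally, for $\phi_1^{2^{e+1}+1}=0$, I would work relative to the diagonal. Since $\bar\Delta^*(\phi_1)=\Delta^*\pi^*(\phi_1)=x+x=0$, the class $\phi_1$ lifts to $H^1(\ss(\RP^m),\bar\Delta\RP^m;\mathbb{Z}_2)$, so $\phi_1^{2^{e+1}+1}$ is the image of a relative class. Nakaoka's description of this relative group (essentially $\tilde H^*(\RP^m\wedge\RP^m/\tau;\mathbb{Z}_2)$) then allows one to check that in degree $2^{e+1}+1$ no class of the required form survives. In the boundary case $m=2^e$, the vanishing is immediate from $\dim\ss(\RP^{2^e})=2^{e+1}$. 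The delicate step remains the identification in the previous paragraph; the vanishing is largely a consequence of the explicit ring structure once the diagonal classes have been isolated.
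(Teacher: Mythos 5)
Your proposal identifies the correct framework (Nakaoka's ring, pull back along $\pi\colon\RP^m\times\RP^m\to\ss(\RP^m)$, ``diagonal'' classes $E_s$), and you correctly isolate the crux of the proof: the pullback $\pi^*$ kills both $\phi_1^{2^{e+1}}$ and $\phi_1^{2^{e+1}+1}$, so the whole argument lives in $\ker\pi^*$, and one must show $\phi_1^{2^{e+1}}$ coincides with a nonzero diagonal class attached to $x^{2^e}$. But this is precisely the step you leave as a gap (``I would identify\ldots'', ``the delicate step remains the identification''), so the heart of the proof is missing. The paper closes it by an explicit induction: $\phi_1^{2^i}=\phi(1\otimes x^{2^i})+E_{2^{i-1}}(x^{2^{i-1}})$, obtained by squaring in Nakaoka's ring, using relation (\ref{productos}) to expand $\phi(1\otimes x^{2^i})^2=\phi(1\otimes x^{2^{i+1}})+\phi(x^{2^i}\otimes x^{2^i})$, relation (b) to kill the $E^2$ and cross terms, relation (d) to write $\phi(x^{2^i}\otimes x^{2^i})=\sum_{s\geq 2}E_s(\Sq^{2^i-s}x^{2^i})$, and the mod $2$ binomial fact that $\Sq^{j}x^{2^i}=0$ for $0<j<2^i$ to reduce that sum to the single term $E_{2^i}(x^{2^i})$. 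Taking $i=e$ and using $x^{2^{e+1}}=0$ (since $m<2^{e+1}$) gives $\phi_1^{2^{e+1}}=E_{2^e}(x^{2^e})$, a basis element, hence nonzero.

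You also overcomplicate the vanishing of $\phi_1^{2^{e+1}+1}$: once one has $\phi_1^{2^{e+1}}=E_{2^e}(x^{2^e})$, the product $\phi_1^{2^{e+1}+1}=E_{2^e}(x^{2^e})\cdot\phi(1\otimes x)$ vanishes instantly by Nakaoka's relation $E_s(b_i)\cdot\phi(b_u\otimes b_v)=0$; there is no need to pass to the relative group $H^*(\ss(\RP^m),\RP^m)$ or to analyze $\tilde H^*(\RP^m\wedge\RP^m/\tau)$. Your dimension argument for $m=2^e$ is correct but covers only the extreme case of the hypothesis $2^e\leq m<2^{e+1}$. In short: right roadmap, but the key computation and the clean mechanism for the vanishing are both absent.
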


Since $\emb(\RP^{2^e})=2^{e+1}$ is well known, it is clear that Proposition~\ref{altura} is all that is needed to have the argument in Example~\ref{sp2p2} prove the general case of Theorem~\ref{corop2}.

\medskip
Here is a brief summary of Nakaoka's description of the mod 2 cohomology ring  of $\ss(X)$ for a finite 0-connected polyhedron~$X$~(\cite{MR0091462}). Through the rest of the paper, cochain complexes and cohomology are taken with coefficients mod 2. 

\medskip
The identity and the involution $\tau$ induce maps at the cochain level $C^*(X\times X,X)$, and we let $\sigma\colon C^*(X\times X,X)\to C^*(X\times X,X)$ stand for the corresponding difference morphism. Note that the kernel and the image of $\sigma$ agree; we let ${}^\sigma C^*(X\times X,X)$ stand for the resulting cochain subcomplex, writing ${}^\sigma H^*(X\times X,X)$ for its cohomology. The so-called Smith-Richardson short exact sequence $$0\to {}^\sigma C^*(X\times X,X) \to C^*(X\times X,X)\to {}^\sigma C^*(X\times X,X)\to0$$ yields a connecting morphism $\partial\colon {}^\sigma H^*(X\times X,X)\to {}^\sigma H^{*+1}(X\times X,X)$. Since the canonical projection $(X\times X,X)\to(\ss(X),X)$ identifies the cochain complexes $C^*(\ss(X),X)$ and ${}^\sigma C^*(X\times X,X)$, we get a morphism $\nu\colon H^*(\ss(X),X)\to H^{*+1}(\ss(X),X)$ corresponding to $\partial$. Then, morphisms $E_s\colon H^*(X)\to H^{*+s}(\ss(X),X)$ are defined for $s\geq1$ as the composition
$$
E_s=\left(H^*(X)\stackrel{\delta}\longrightarrow H^{*+1}(\ss(X),X)\stackrel{\nu^{s-1}}\longrightarrow H^{*+s}(\ss(X),X)\right),
$$
where $\delta$ is the usual connecting map associated to the pair $(\ss(X),X)$. On the other hand, note that the transfer map $C^*(X\times X)\to C^*(\ss(X))$ lands in the relative cochain subcomplex $C^*(\ss(X),X)$ thus defining a morphism $\phi\colon H^*(X\times X)\to H^*(\ss(X),X)$. Lastly, by restricting under the inclusion of pairs $(X,\varnothing)\hookrightarrow(\ss(X),X)$, we get corresponding maps $H^*(X)\to H^{*+s}(\ss(X))$ and $H^*(X\times X)\to H^*(\ss(X))$, which will also be denoted by $E_s$ and $\phi$, respectively (the context will clarify which map we refer to).

\medskip
The results we need from Nakaoka's work~\cite{MR0091462} are packed in the following omnibus result:

\begin{theo}\label{nakaoka} Fix a homogeneous basis $\{b_0, b_1,\ldots,b_m\}$ of $H^*(X)$. Let $R$ stand for either $\varnothing$ or $X$, and set
$$
\ell=\begin{cases}1,& \mbox{if $R=X;$}\\2, & \mbox{if $R=\varnothing$.}\end{cases}
$$
A basis for $H^*(\ss(X),R)$ consists of 1, the elements $E_s(b_i)$ with $\ell\leq s\leq \deg(b_i)$, and the elements $\phi(b_i\otimes b_j)$ with $i<j$. The ring structure is determined by the two relations:
\begin{enumerate}[(a)]
\item\label{productos} $\phi(b_i\otimes b_j)\cdot\phi(b_u\otimes b_v)=\phi((b_i\cdot b_u)\otimes(b_j\cdot b_v))+\phi((b_i\cdot b_v)\otimes(b_j\cdot b_u))$.
\item $E_s(b_i)\cdot\phi(b_u\otimes b_v)=E_s(b_i)\cdot E_t(b_j)=0$.
\end{enumerate}
The right-hand side in~(\ref{productos}) can be expanded in terms of basis elements by repeated applications of the relations:
\begin{enumerate}[(a)]\addtocounter{enumi}{2}
\item $\phi(b_j\otimes b_i)=\phi(b_i\otimes b_j)$.
\item $\phi(b_i\otimes b_i)=\sum_{s=\ell}^{\deg(b_i)}E_s(\Sq^{\deg(b_i)-s}b_i)$. 
\item $E_{\deg(b_i)+k}(b_i)=\sum_{s=\max(k,\ell)}^{\deg(b_i)+k-1}E_s(\Sq^{\deg(b_i)+k-s}b_i)$, for $k\geq1$.
\end{enumerate}
The action of the Steenrod algebra is determined by the relations:
\begin{enumerate}[(a)]\addtocounter{enumi}{5}
\item $\Sq^k\phi(b_i\otimes b_j)=\phi\Sq^k(b_i\otimes b_j)+\sum_{s=\ell}^kE_s(\Sq^{k-s}(b_i\cdot b_j))$.
\item $\Sq^kE_s(b_i)=\sum_{j=0}^{k}\binom{s-1}{k-j}E_{k+s-j}(\Sq^jb_i)$, for $\ell\leq s\leq\deg(b_i)$.
\end{enumerate}
\end{theo}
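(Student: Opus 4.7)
The plan is to apply Theorem~\ref{nakaoka} (with $X=\RP^m$, $R=\varnothing$, and the standard basis $b_i=x^i$, $0\leq i\leq m$) in three stages: first to identify the generator $\phi_1$, then to compute its $2$-power iterates by induction, and finally to multiply by $\phi_1$ once more to obtain the vanishing.

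I would start by observing that every basis element of $H^*(\ss(\RP^m);\mathbb{Z}_2)$ other than $\phi(1\otimes x)$ has degree $\geq 2$: the classes $\phi(x^i\otimes x^j)$ with $i<j$ have degree $i+j$, which equals $1$ only at $(i,j)=(0,1)$; and the classes $E_s(x^i)$ require $\ell=2\leq s\leq i$, hence their degree $s+i$ is at least $4$. Consequently $H^1(\ss(\RP^m);\mathbb{Z}_2)=\mathbb{Z}_2\cdot\phi_1$ with $\phi_1:=\phi(1\otimes x)$.

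The heart of the proof will be the inductive identity
\begin{equation}\label{induct-ansatz}
\phi_1^{2^k}=\phi(1\otimes x^{2^k})+E_{2^{k-1}}(x^{2^{k-1}})
\end{equation}
in $H^*(\ss(\RP^m);\mathbb{Z}_2)$ for $k\geq 1$, with the conventions that a summand whose argument exceeds $x^m$ is zero, and that the symbol $E_1(x)$ represents $0$ in absolute cohomology (traceable to the long exact sequence of the pair $(\ss(\RP^m),\RP^m)$, since $\phi_1$ vanishes on the diagonal $\RP^m\subset\ss(\RP^m)$). The base case $k=1$ drops out of Nakaoka's~(f), whose auxiliary sum is empty for $\ell=2>1$: $\phi_1^2=\Sq^1\phi_1=\phi(1\otimes\Sq^1 x)=\phi(1\otimes x^2)$. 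For the inductive step, since $\phi_1^{2^k}$ lives in degree $2^k$, we have $\phi_1^{2^{k+1}}=\Sq^{2^k}(\phi_1^{2^k})$, and I would apply $\Sq^{2^k}$ to the two summands of~(\ref{induct-ansatz}) via Nakaoka's formulas~(f) and~(g). The $\phi$-summand produces, by~(f), $\phi(1\otimes x^{2^{k+1}})+\sum_{s=2}^{2^k}E_s(\binom{2^k}{s}x^{2^{k+1}-s})$, and Lucas' theorem collapses the sum to the single term $E_{2^k}(x^{2^k})$, since $\binom{2^k}{s}$ is odd only at $s\in\{0,2^k\}$. The $E$-summand produces, by~(g), a sum whose surviving terms require simultaneously $\binom{2^{k-1}-1}{2^k-j}\equiv 1\pmod 2$ (forcing $j>2^{k-1}$) and $\binom{2^{k-1}}{j}\equiv 1\pmod 2$ (forcing $j\in\{0,2^{k-1}\}$); these two constraints are incompatible, so the entire $E$-contribution vanishes. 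Assembling the two outputs yields~(\ref{induct-ansatz}) at level $k+1$.

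Specializing~(\ref{induct-ansatz}) to $k=e+1$ finishes the argument. The hypothesis $m<2^{e+1}$ kills the first summand (since $x^{2^{e+1}}=0$ in $H^*(\RP^m;\mathbb{Z}_2)$), while $2^e\leq m$ together with $e\geq 1$ (forced by $m\geq 2$) makes $E_{2^e}(x^{2^e})$ a genuine basis element of $H^*(\ss(\RP^m);\mathbb{Z}_2)$, so $\phi_1^{2^{e+1}}=E_{2^e}(x^{2^e})\neq 0$. Multiplying once more by $\phi_1=\phi(1\otimes x)$ and invoking Nakaoka's annihilation relation~(b), which asserts $E_s(b_i)\cdot\phi(b_u\otimes b_v)=0$, gives $\phi_1^{2^{e+1}+1}=0$. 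The main obstacle will be the Lucas-theorem bookkeeping for the $E$-summand at the inductive step: one must rule out any cancellation across the double sum produced by~(g), which reduces to showing that two separate binomial parity constraints are nowhere simultaneously satisfied.
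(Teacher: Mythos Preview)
Your write-up is a correct proof of Proposition~\ref{altura}; note that the displayed statement, Theorem~\ref{nakaoka}, is Nakaoka's result quoted in the paper without proof and serves as the \emph{input} here, not the target. Both you and the paper establish the same inductive identity $\phi_1^{2^k}=\phi(1\otimes x^{2^k})+E_{2^{k-1}}(x^{2^{k-1}})$ and then read off $\phi_1^{2^{e+1}}=E_{2^e}(x^{2^e})\neq0$ and $\phi_1^{2^{e+1}+1}=0$ at $k=e+1$ exactly as you indicate.

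The only genuine difference is in how the squaring step is executed. The paper squares $\phi_1^{2^k}$ through the \emph{multiplicative} relations (a), (b), (d) of Theorem~\ref{nakaoka}: the cross terms and the $E$-square vanish immediately by~(b), leaving $\phi(1\otimes x^{2^k})^2=\phi(1\otimes x^{2^{k+1}})+\phi(x^{2^k}\otimes x^{2^k})$, and a single Lucas computation via~(d) collapses the last summand to $E_{2^k}(x^{2^k})$. You instead invoke $\phi_1^{2^{k+1}}=\Sq^{2^k}\phi_1^{2^k}$ and push the computation through the \emph{Steenrod} relations (f), (g). This is equally valid but costs an extra Lucas bookkeeping step to kill $\Sq^{2^k}E_{2^{k-1}}(x^{2^{k-1}})$, whereas the paper disposes of the $E$-contribution for free via~(b). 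So the paper's route is marginally shorter, while yours has the virtue of showing that the Steenrod-action formulas (f)--(g) by themselves already determine the height of~$\phi_1$. (Your remark that $E_1(x)=0$ in absolute cohomology is correct: it is the composite of $\delta$ with the restriction map in the long exact sequence of the pair, hence vanishes by exactness.)
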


Of course, Theorem~\ref{nakaoka} is most useful when we actually know the structure of $H^*(X)$ as an algebra over the mod 2 Steenrod algebra, and we then get a full description of $H^*(\ss(X),R)$ as an algebra over the mod 2 Steenrod algebra.

\begin{ejem}\label{nakatoro}{\em
A basis for the mod 2 cohomology of the torus $T=S^1\times S^1$ consists of the elements $1$, $x$, $y$ and $xy$ (with trivial action of the Steenrod algebra), where $x$ and $y$ are 1-dimensional clases. Then a basis for the mod 2 cohomology of $\ss(T)$ is given by 1, $\phi(1\otimes x)$, $\phi(1\otimes y)$, $\phi(1\otimes xy)$, $\phi(x\otimes y)$, $\phi(x\otimes xy)$, $\phi(y\otimes xy)$ and $E_2(xy)$. Further, by straightforward calculation we check that the only non-vanishing products are
\begin{align*}
&\phi(1\otimes x)\phi(1\otimes y)=\phi(1\otimes xy)+\phi(x\otimes y);\\
&\phi(1\otimes xy)^2=\phi(x\otimes y)^2=\phi(xy\otimes xy)=E_2(xy);\\
&\phi(1\otimes x)\phi(1\otimes y)\phi(x\otimes y)=\phi(xy\otimes xy)=E_2(xy);\\
&\phi(1\otimes x)\phi(1\otimes y)\phi(1\otimes xy)=\phi(xy\otimes xy)=E_2(xy).
\end{align*}
}\end{ejem}

\begin{proof}[Proof of Proposition~\ref{altura}]
We use Theorem~\ref{nakaoka} with the obvious basis $\{1,x,x^2,\ldots,x^m\}$ of $H^*(\RP^m)$ and $R=\varnothing$ (so $\ell=2$). Recall $\Sq^sx^i=\binom{i}{s}x^{i+s}$. Then the only basis element of degree 1 in $H^*(\ss(X))$ is $\phi_1=\phi(1\otimes x)$, which by direct calculation has
\begin{align*}
\phi_1^2=&\phi(1\otimes x^2)+\phi(x\otimes x)=\phi(1\otimes x^2), \\
\phi_1^4=&\phi(1\otimes x^4)+\phi(x^2\otimes x^2)=\phi(1\otimes x^4)+E_2(x^2).
\end{align*}
Assuming $\phi_1^{2^i}=\phi(1\otimes x^{2^i})+E_{2^{i-1}}(x^{2^{i-1}})$, we get
$$
\phi_1^{2^{i+1}}=(\phi(1\otimes x^{2^i})+E_{2^{i-1}}(x^{2^{i-1}}))^2=\phi(1\otimes x^{2^{i+1}})+\phi(x^{2^i}\otimes x^{2^i}),
$$
which by standard properties of mod 2 binomial coefficients (and, of course, Theorem~\ref{nakaoka}) implies $$\phi_1^{2^{i+1}}=\phi(1\otimes x^{2^{e+1}})+E_{2^i}(x^{2^i}).$$ The conclusion of Proposition~\ref{altura} follows from the  $i=e$ case of the last equality.
\end{proof}

The arguments in this section suggest that a systematic analysis of the (rich but not yet fully explored) algebraic topology properties of the symmetric square $\mathrm{SP}^2(\RP^m)$ could have implications on (and lead to a better understanding of) $\TC^\Sigma(\RP^m)$ and $\sb(m)$. For instance, Mark Grant has noticed that $\TC^\Sigma(X)$ is bounded from below by the cup-length of $H^*(\ss(X))$ ---compare to Proposition~\ref{markscuplength} below. The latter observation can be used (with $X=S^1$, see Corollary~\ref{circle} below) to reprove, in a slightly streamlined way, the fact (first noticed in~\cite{doninprep,markinprep}) that $\TC^\Sigma(S^1)=2$. 

\medskip
The following result is basically~\cite[Theorem~4.5]{markinprep}. We offer a slightly more conceptual proof.
\begin{proposition}\label{markscuplength}
$\TC^\Sigma(X)$ is bounded from below by the sectional category of the diagonal inclusion $X\hookrightarrow\ss(X)$.
\end{proposition}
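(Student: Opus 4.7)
The plan is to realize $\secat(\Delta\colon X\hookrightarrow\ss(X))$ as the sectional category of a canonical map obtained from $e_{0,1}\colon\P(X)\to X\times X$ by quotienting out the switching $\tau$-action, and then to descend the $\tau$-equivariant motion planners underlying $\TC^\Sigma(X)$ to sections of that map. Concretely, since $e_{0,1}$ is $\tau$-equivariant it induces
$$
\bar e\colon \P(X)/\tau\longrightarrow\ss(X),
$$
and any $\tau$-invariant open $U\subseteq X\times X$ equipped with a $\tau$-equivariant section $s\colon U\to\P(X)$ of $e_{0,1}$ yields an open $V=U/\tau\subseteq\ss(X)$ (open because the orbit projection $X\times X\to\ss(X)$ is an open map, since $\mathbb{Z}_2$ is finite) together with a strict continuous section $\bar s\colon V\to\P(X)/\tau$ of $\bar e$. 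Applying this to a cover realizing $\TC^\Sigma(X)$ gives $\TC^\Sigma(X)\geq\secat(\bar e)$.

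Next I would identify $\secat(\bar e)$ with $\secat(\Delta)$. The constant-path inclusion $c\colon X\to\P(X)$, $c(x)=c_x$, is $\tau$-equivariant (constant paths are palindromic) and satisfies $e_{0,1}\circ c=\Delta$, so after passing to $\tau$-orbits it furnishes $\bar c\colon X\to\P(X)/\tau$ with $\bar e\circ\bar c=\Delta$. The crucial verification is that $\bar c$ is a homotopy equivalence. A $\tau$-equivariant homotopy inverse upstairs is the midpoint evaluation $m\colon\P(X)\to X$, $m(\gamma)=\gamma(1/2)$, which is $\tau$-invariant because path reversal preserves the midpoint. The homotopy
$$
H(\gamma,s)(t)=\gamma\!\left(\tfrac{1}{2}+s\bigl(t-\tfrac{1}{2}\bigr)\right)
$$
from $c\circ m$ to the identity on $\P(X)$ is $\tau$-equivariant by the direct check $H(\tau\gamma,s)(t)=\gamma(1/2-s(t-1/2))=H(\gamma,s)(1-t)$. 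Descending to the quotients, $\bar c$ and $\bar m$ become mutually inverse homotopy equivalences.

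Combining these observations gives $\secat(\Delta)=\secat(\bar e\circ\bar c)=\secat(\bar e)\leq\TC^\Sigma(X)$, where the first equality uses the invariance of sectional category under precomposition with a homotopy equivalence in the source. The only point requiring genuine care is the $\tau$-equivariance of the deformation retraction of $\P(X)$ onto its constant-path subspace, which is why the retraction must be performed about the midpoint $t=1/2$ rather than about an endpoint as in the usual (non-equivariant) argument; everything else—continuity of $\bar s$, openness of $V$, and the formal manipulation of sectional category—is routine.
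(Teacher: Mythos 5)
Your argument follows the paper's proof essentially step for step: the paper also passes to the induced map $e'_{0,1}\colon\P(X)/\tau\to\ss(X)$ (your $\bar e$), observes that the $\tau$-equivariant sectional category of $e_{0,1}$ dominates $\secat(e'_{0,1})$, and identifies the latter with $\secat(X\hookrightarrow\ss(X))$ via the constant-path inclusion $X\to\P(X)/\tau$ and midpoint evaluation $\P(X)/\tau\to X$, whose composite is homotopic to the identity "by contraction of paths toward their middle point''---precisely your explicit $\tau$-equivariant homotopy $H$. The only difference is that you spell out details (openness of the quotient, the formula for $H$ and its equivariance check) that the paper leaves implicit in its commutative-up-to-homotopy diagram; the content and structure are the same.
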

\begin{proof}
Consider the diagram
$$\xymatrix{
X \ar@{^{(}->}[r] \ar@{_{(}->}[rd] & \P(X) \ar[r] \ar[d]^{e_{0,1}} & \P(X) / \tau \ar[r] \ar[d]_{e'_{0,1}} & X \ar@{^{(}->}[ld]\\
 & X\times X \ar[r] & \ss(X) 
}$$
where both slanted maps are diagonal inclusions, $e'_{0,1}$ is induced by $e_{0,1}$, $X$ is embedded in $\P(X)$ as the subspace of constant paths, and the right-most horizontal arrow sends the equivalence class of a path $\gamma$ to $\gamma(1/2)$. The diagram is strictly commutative, except for the right-hand side triangle, which commutes only up to homotopy (by contraction of paths toward their middle point). Note that $\TC^\Sigma(X)=\secat(e_{0,1})\geq\secat(e'_{0,1})$, the latter of which agrees with $\secat(X\hookrightarrow\ss(X))$, in view of the diagram.
\end{proof}

\begin{ejem}\label{tcstoro}{\em 
For a finite polyhedron $X$, the diagonal inclusion $X\hookrightarrow\ss(X)$ induces the trivial map in mod 2 positive-dimensional cohomology (see~\cite[Theorems~11.2 and~11.4]{MR0091462}). The usual nilker lower bound for $\secat(X\hookrightarrow\ss(X))$ then shows that $\TC^\Sigma(X)$ is bounded from below by the mod 2 cup-length of $\ss(X)$. In particular, Example~\ref{nakatoro} implies $3\leq\TC^\Sigma(S^1\times S^1)$. On the other hand, $\TC^\Sigma(S^1\times S^1)\leq\TC^S(S^1\times S^1)\leq 4$, in view of~\cite[Proposition~10]{MR2359031}. Note that deciding the sharp estimate for both $\TC^\Sigma(S^1\times S^1)$ and $\TC^S(S^1\times S^1)$ is decidable by (primary) obstruction-theoretic methods (in the equivariant setting, for $\TC^\Sigma$). It would be well worth taking a look at the actual needed computations, as this might lead to an example with $\TC^\Sigma\neq\TC^S$.
}\end{ejem}

\begin{corollary}\label{circle}
$\TC^\Sigma(S^1)=2$.
\end{corollary}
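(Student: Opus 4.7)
The plan is to establish the two inequalities $\TC^\Sigma(S^1)\leq 2$ and $\TC^\Sigma(S^1)\geq 2$ separately. The upper bound will be immediate from Theorem~\ref{sigmavssb}: since $\emb(\RP^1)=\emb(S^1)=2$, we get
$$\TC^\Sigma(S^1)=\TC^\Sigma(\RP^1)\leq\TC^S(\RP^1)\leq\emb(\RP^1)=2.$$

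For the lower bound, I would use the characterization $\TC^\Sigma(\RP^1)=\sbp(1)$ from Theorem~\ref{sigmavssb} and argue that $\sbp(1)\geq 2$ by contradiction. Assume a symmetric $\mathbb{Z}_2$-biequivariant map $b\colon S^1\times S^1\to S^1\subset\mathbb{R}^2$ exists whose image of $\Delta_{S^1}$ misses some line through the origin of $\mathbb{R}^2$; equivalently, $b\circ\Delta\colon S^1\to S^1$ fails to be surjective. The key step is a degree parity computation: the bi-skew hypothesis forces each axis restriction $b(\cdot,1), b(1,\cdot)\colon S^1\to S^1$ to be an odd self-map of $S^1$, which by the classical one-dimensional Borsuk-Ulam argument (lift to the universal cover $\mathbb{R}\to S^1$ and observe that the displacement over a half-turn must be an odd multiple of $\pi$) has odd degree. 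Symmetry $b(z,w)=b(w,z)$ then identifies the two axis degrees with a single odd integer $p$, and a Künneth computation on the diagonal (writing $b^{\ast}(u)=px+py$ and applying $\Delta^{\ast}$) yields $\deg(b\circ\Delta)=p+p=2p\neq 0$. But any self-map of $S^1$ of non-zero degree is surjective (a non-surjective self-map factors through $S^1\setminus\{\mathrm{pt}\}\simeq\mathbb{R}$ and is hence nullhomotopic), which contradicts the assumption on $b\circ\Delta$.

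Combining the two bounds will give $\TC^\Sigma(S^1)=2$, consistently with Remark~\ref{chiquito} and in sharp contrast with $\sb(1)=1$, for which complex multiplication provides the requisite symmetric bi-skew map. The substantial ingredients are already on hand: the equality $\TC^\Sigma(\RP^m)=\sbp(m)$ and the upper bound $\TC^S(\RP^m)\leq\emb(\RP^m)$ from Theorem~\ref{sigmavssb}, together with the one-dimensional Borsuk-Ulam theorem. I foresee no substantive obstacle beyond careful bookkeeping of the degrees; note in particular that the cohomological lower bound of Proposition~\ref{markscuplength} is insufficient here, since $\ss(S^1)$ is homotopy equivalent to a circle and has mod~$2$ cup-length equal to $1$, so the argument must genuinely use the diagonal-control built into $\sbp$.
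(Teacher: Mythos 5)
Your proof is correct, but it follows a genuinely different route from the paper. The paper's own argument is: $\TC^\Sigma(S^1)\leq\TC^S(S^1)=2$ for the upper bound, and for the lower bound it supposes $\TC^\Sigma(S^1)\leq 1$, applies the product inequality of Lemma~\ref{proine} to conclude $\TC^\Sigma(S^1\times S^1)\leq 2$, and contradicts this with the cup-length bound $\TC^\Sigma(S^1\times S^1)\geq 3$ coming from Nakaoka's computation of $H^*(\ss(S^1\times S^1);\mathbb{Z}_2)$ in Example~\ref{tcstoro}. You instead invoke the full strength of the characterization $\TC^\Sigma(\RP^1)=\sbp(1)$ from Theorem~\ref{sigmavssb} and show $\sbp(1)\geq 2$ directly by a degree-parity (Borsuk--Ulam) argument: bi-skewness forces both axis degrees odd, symmetry forces them equal to a common odd $p$, so $\deg(b\circ\Delta)=2p\neq 0$ and $b\circ\Delta$ is surjective, contradicting the hypothesis that the image of the diagonal misses a hyperplane. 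Both proofs are valid; yours is more elementary and self-contained (no appeal to the product inequality or to the cohomology of $\ss(S^1\times S^1)$), and it isolates precisely where the strengthened notion $\sbp$ does extra work over $\sb$ --- a point you correctly flag by noting that $\ss(S^1)$ has mod~$2$ cup-length $1$, so Proposition~\ref{markscuplength} alone would not suffice, in sharp contrast with $\sb(1)=1$. The paper's proof trades this directness for reuse of machinery (Lemma~\ref{proine} and Example~\ref{tcstoro}) that is already in place for other purposes, and thereby sidesteps any degree computation.
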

\begin{proof}
Recall $\TC^\Sigma(S^1)\leq\TC^S(S^1)=2$. If $\TC^\Sigma(S^1)\leq1$, the product inequality for $\TC^\Sigma$ (Lemma~\ref{proine} below) would yield $\TC^\Sigma(S^1\times S^1)\leq2$, which is impossible in view of Example~\ref{tcstoro}.
\end{proof}

The proof of~\cite[Theorem~11]{Far} can be used, word for word (using $\tau$-equivariant partitions of unit), to prove the auxiliary:
\begin{lema}\label{proine}
For paracompact spaces $X$ and $Y$, $\TC^\Sigma(X\times Y)\leq\TC^\Sigma(X)+\TC^\Sigma(Y)$.
\end{lema}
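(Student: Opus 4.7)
The plan is to replay Farber's subadditivity argument from \cite[Theorem~11]{Far} for the ordinary $\TC$, inserting a $\tau$-equivariant ingredient at every step. Set $n=\TC^\Sigma(X)$ and $m=\TC^\Sigma(Y)$, and pick $\tau$-stable open covers $\{U_0,\dots,U_n\}$ of $X\times X$ and $\{V_0,\dots,V_m\}$ of $Y\times Y$, together with $\tau$-equivariant local sections $s_i\colon U_i\to\P(X)$ and $s'_j\colon V_j\to\P(Y)$ of the respective double evaluation maps. Paracompactness of $X\times X$ and $Y\times Y$ guarantees the existence of $\tau$-equivariant partitions of unity $\{f_i\}_{i=0}^n$ and $\{g_j\}_{j=0}^m$ subordinate to these covers, via the same results already invoked for this purpose in Section~2.

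Under the canonical identification $(X\times Y)^2\cong X^2\times Y^2$, the switching involution on $(X\times Y)^2$ corresponds to the product of the switching involutions on the two factors. Pulling $f_i$ and $g_j$ back along the projections gives $\tau$-invariant functions on $(X\times Y)^2$, and the products $\phi_{ij}:=f_i\,g_j$ constitute a $\tau$-invariant partition of unity indexed by $(i,j)\in\{0,\dots,n\}\times\{0,\dots,m\}$, with $\operatorname{supp}(\phi_{ij})$ contained in the pulled-back $U_i\cap V_j$.

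The heart of the argument is Farber's clever grouping of these $(n+1)(m+1)$ functions by the diagonal value $k=i+j$ into $n+m+1$ open sets $W_0,\dots,W_{n+m}$ covering $(X\times Y)^2$, each $W_k$ being a disjoint union of open pieces with the property that each piece is contained in some pulled-back $U_i\cap V_j$ with $i+j=k$. Because every $\phi_{ij}$ is $\tau$-invariant, the pieces entering the construction are $\tau$-stable; on the piece inside $U_i\cap V_j$, the coordinatewise product $(s_i,s'_j)\colon U_i\cap V_j\to\P(X)\times\P(Y)=\P(X\times Y)$ is a $\tau$-equivariant local section of the double evaluation map $e_{0,1}$ for $X\times Y$. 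Gluing across the disjoint pieces produces a $\tau$-equivariant section on each $W_k$, so $\TC^\Sigma(X\times Y)\leq n+m$.

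The only obstacle is bookkeeping: one must verify that $\tau$-invariance and $\tau$-equivariance survive each of Farber's combinatorial manipulations. This is automatic because the involution on $(X\times Y)^2$ respects the factorization $X^2\times Y^2$, so all $\tau$-equivariant data on the two factors assemble without loss into $\tau$-equivariant data on the product---precisely the content of the parenthetical remark ``using $\tau$-equivariant partitions of unit'' in the statement being proved.
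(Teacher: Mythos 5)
Your proposal is correct and takes precisely the route the paper has in mind: replay Farber's product-inequality argument from \cite[Theorem~11]{Far}, substituting $\tau$-equivariant partitions of unity, and observe that $\tau$-invariance of the products $f_ig_j$ makes all sets defined by inequalities among them $\tau$-stable, while the coordinatewise product sections are $\tau$-equivariant under the identification $\P(X)\times\P(Y)\cong\P(X\times Y)$. The paper's own justification is the one-line remark that Farber's proof works ``word for word (using $\tau$-equivariant partitions of unit),'' so you have simply unpacked what that remark means.
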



\bigskip\sc
Departamento de Matem\'aticas

Centro de Investigaci\'on y de Estudios Avanzados del IPN

Av.~IPN 2508, Zacatenco, M\'exico City 07000, M\'exico

{\tt jesus@math.cinvestav.mx}

\end{document}